\theoremstyle{plain}
    \newtheorem{thm}{Theorem}
    \newtheorem*{scho}{Scholium}
    \newtheorem{prop}{Proposition}
    \newtheorem{lemma}[prop]{Lemma}
\theoremstyle{definition}
    \newtheorem*{defn}{Definition}
\theoremstyle{remark}
    \newtheorem*{rem}{Remark}
    \newtheorem*{ack}{Acknowledgements}
     \newtheorem*{claim}{Claim}
\newcommand{\R}{\mathbb{R}}
\newcommand{\Z}{\mathbb{Z}}
\newcommand{\N}{\mathbb{N}}
\renewcommand{\P}{\mathbb{P}}
\newcommand{\cA}{\mathcal{A}}\newcommand{\cC}{\mathcal{C}}\newcommand{\cD}{\mathcal{D}}
\newcommand{\cP}{\mathcal{P}}
\renewcommand{\setminus}{\smallsetminus}
\renewcommand{\emptyset}{\varnothing}
\renewcommand{\Im}{\mathrm{Im}}
\newcommand{\SL}{\mathrm{SL}}
\newcommand{\GL}{\mathrm{GL}}
\newcommand{\Id}{\mathrm{Id}}
\DeclareMathOperator{\interior}{int}
\newcommand{\wed}{{\mathord{\wedge}}}
\newcommand{\m}{\mathfrak{m}}
\title{Some Characterizations of Domination}
\author[Bochi]{Jairo Bochi}
\address{Catholic University of Rio de Janeiro, Brazil}
\urladdr{www.mat.puc-rio.br/$\sim$jairo}
\email{jairo@mat.puc-rio.br}
\author[Gourmelon]{Nicolas Gourmelon}
\address{IMPA, Rio de Janeiro, Brazil}
\email{nicolas@impa.br}
\date{August 31, 2008.}
\thanks{Both authors were partially supported by CNPq (Brazil).}
\begin{document}

\begin{abstract}
We show that a cocycle has a dominated splitting if and only if there is a uniform exponential gap between
singular values of its iterates.
Then we consider sets $\Sigma$ in $\GL(d,\R)$ with the property that
any cocycle with values in $\Sigma$ has a dominated splitting.
We characterize these sets in terms of existence of invariant multicones,
thus extending a $2$-dimensional result by Avila, Bochi, and Yoccoz.
We give an example showing how these multicones can fail to have convexity properties.
\end{abstract}

\maketitle

\section{Introduction}

Let $X$ be a compact invariant set for a diffeomorphism $T$,
and $E\oplus F$ is a splitting  of the tangent bundle over $X$ that is invariant by the tangent map $dT$.
Following Smale, the splitting is called \emph{hyperbolic} if 
vectors in $E$ are uniformly expanded by $dT$, while vectors in $F$ are uniformly contracted.
This notion is very stringent, and many weaker forms of it are studied in the literature.
One of these, that is the concern of this paper, is \emph{domination}.
A splitting $E\oplus F$ as above is called \emph{dominated} if
above each point all vectors in $E$ are more expanded than all vectors in $F$.
This is equivalent to $E$ being a hyperbolic attractor and $F$ being a hyperbolic repeller in the projective bundle.
Hence domination could also be called uniform projective hyperbolicity.

This notion was important in the works of
Ma\~n\'e~\cite{DD78_Mane} and Liao~\cite{DD80_Liao} on Smale's Stability Conjecture.
The term ``dominated splitting'' apparently was introduced by Ma\~n\'e in \cite{DD84_Mane}.
The concept of domination also appeared in Ordinary Differential Equations
under several different forms and names, see \cite{DD60, DD66, DD69, DD78bis, DD82}.
Among recent work in  Differentiable Dynamical Systems
involving domination, one can mention
\cite{BDP, DD02_Pujals, BV Annals, DD07_Pujals, DD07_Diaz}.

The concepts of hyperbolicity and domination naturally make sense
in the general setting of automorphisms of vector bundles, or {\em linear cocycles}.
In the case of bundles of the form $X \times \R^d$,
a linear cocycle is then canonically identified to a pair $(T,A)$, where $T$ is a dynamics on $X$ and $A$ is a family of matrices indexed by $X$. 
Such systems are widely studied in several contexts, as for example
products of random matrices and Schr\"odinger operators~\cite{BL}.

\medskip

Our aim is to give some characterizations of domination. Let us briefly summarize the results and the organization of this paper.

In~\cite{Yoccoz}, Yoccoz shows that a cocycle on a $2$-dimensional vector bundle 
admits a hyperbolic splitting if and only if
the non-conformality of the matrices of the iterates grows uniformly exponentially.
Our first result extends this to any dimension, giving 
a necessary and sufficient criterium for domination that does not refer to any splitting.
More precisely, we prove in Section~\ref{s.cocycles} that
a cocycle admits a dominated splitting $E \oplus F$  with $\dim E =i$ if and only if the $i$-eccentricity (the ratio between the $i$-th and $(i+1)$-th singular values) of the matrices of the $n$-th iterate increase uniformly exponentially as $n$ goes to $\infty$.

In Section~\ref{s.sets}, we deal with bundles of the form $X \times \R^d$.
We consider families of matrices $A=\{A_x\}_{x\in X}$ that are dominated in the sense that
the cocycle $(T,A)$ admits a dominated splitting for any choice of the dynamics $T$.
A study of dominated finite families of matrices in $\SL(2,\R)$ was initiated by~\cite{Yoccoz, ABY}.
An important tool is the characterization of these families by the existence of what they call \emph{invariant multicones}
in projective space.
Here we find a generalization of such a description to arbitrary dimension.

Much of the results of~\cite{ABY} rely on the simple structure of the multicones in dimension $2$.
This simplicity cannot be retrieved in greater dimension. 
Indeed, we show in Section~\ref{s.example} that the connected components of the multicones
may fail to have any convexity property.


\begin{ack}
We would like to thank Christian Bonatti for some important ideas and discussions.
Also thanks to Sylvain Crovisier and Lorenzo D\'iaz.
\end{ack}

\section{Dominated Splittings for Cocycles}\label{s.cocycles}

Let $V$ be a continuous vector bundle
over a compact Hausdorff space $X$,
and of finite dimension $d$.
Denote by $V_x$ the fibre over $x \in X$.

Fix on $V$ a (continuous) inner product $\langle \mathord{\cdot}, \mathord{\cdot} \rangle$,
and let $\| \mathord{\cdot} \|$ be the induced norm.
If $B: V_x \to V_y$ is a linear map, its norm $\|B\|$ and co-norm $\m(B)$ are defined respectively
as the supremum and the infimum of $\|Bv\|$ over the unit vectors $v \in V_x$.
We have $\m(B)= \|B^{-1}\|^{-1}$ when $B$ is invertible.
Let $B^*: V_y \to V_x$ be the adjoint of $B$.
We denote by $\sigma_1(B) \ge \cdots \ge \sigma_d(B)$
the singular values of $B$, that is,
the eigenvalues of $(B^* B)^{1/2}$.
We have $\sigma_1(B) =\|B\|$ and $\sigma_d(B) = \m(B)$.

Let $\cA: V \to V$ be a vector bundle automorphism,
fibering over a homeomorphism $T : X \to X$.
We also call $\cA$ a \emph{cocycle}.
For each $x\in X$ and $n \in \Z$, we let $A^n(x)$ be the
restriction of $\cA^n$ to the fibre $V_x$;
it is a linear map from $V_x$ to $V_{T^n(x)}$.
We write $A(x) = A^1(x)$.
In the case where the vector bundle is trivial, ie, $V = X \times \R^d$,
we can regard $A$ as a map $A \colon X \to \GL(d,\R)$;
by abuse of notation we write $\cA = (T,A)$.

A \emph{splitting} $E\oplus F$ for the bundle $V$
is a continuous family of splittings $E_x \oplus F_x = V_x$,
where we require the dimensions of $E_x$ and $F_x$ to be constant.
The dimension of $E$ is called the \emph{index} of the splitting.
The splitting is invariant (with respect to $\cA$) if
$A(x) \cdot E_x = E_{T(x)}$, $A(x) \cdot F_x = F_{T(x)}$,

A splitting $V = E \oplus F$ is called
\emph{dominated} for $\cA$ if it is invariant and
there are constants $C>0$ and $0<\tau<1$ such that
$$
\frac{\| A^n(x)|F_x \|}{\m (A^n(x)|E_x)} < C \tau^n \quad
\text{for every $x\in X$ and every $n \ge 0$.}
$$
It is also said that \emph{$E$ dominates $F$}.
It is always possible to find an {\em adapted metric}, that is, an inner product such that $C=1$, see~\cite{G}.\footnote{The existence of adapted metrics justifies the informal definition of the introduction.}
Some properties about dominated splittings are collected in \cite{BDV_livro}.

\medskip

Our first result is:

\begin{thm}\label{t.yoccoz}
The following assertions about a vector bundle automorphism $\cA$ are equivalent:
\begin{enumerate}
\item There is a dominated splitting of index $i$.
\item There exist $C>0$ and $\tau<1$ such that
${\displaystyle \frac{\sigma_{i+1}(A^n(x))}{\sigma_i(A^n(x))} < C \tau^n}$
for all $x\in X$ and $n \ge 0$.
\end{enumerate}
\end{thm}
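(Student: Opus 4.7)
The plan is to prove the two implications separately, with the forward direction being routine and the converse requiring the construction of the splitting as a Grassmannian limit of finite-time ``singular subspaces''.

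\textbf{For $(1)\Rightarrow(2)$:} After passing to an adapted metric so that $\|A^n(x)|F_x\|\le \tau^n\,\m(A^n(x)|E_x)$, I would invoke the Courant--Fischer minimax characterization of singular values,
\[
\sigma_i(B)=\max_{\dim W=i}\m(B|W),\qquad \sigma_{i+1}(B)=\min_{\dim W=d-i}\|B|W\|,
\]
valid for any linear isomorphism $B$ between inner-product spaces. Taking $B=A^n(x)$, using the invariant subspace $E_x$ as witness in the first formula and $F_x$ in the second, one immediately gets $\sigma_i(A^n(x))\ge \m(A^n(x)|E_x)$ and $\sigma_{i+1}(A^n(x))\le \|A^n(x)|F_x\|$, and dividing yields (2) with the same constants as the definition of domination.

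\textbf{For $(2)\Rightarrow(1)$, the harder direction:} I would construct the fast subspace $E$ as a limit of finite-time optimizers. Define $E_n(x)\in\mathrm{Gr}(i,V_x)$ as the span of the right singular vectors of $A^n(x)$ corresponding to its $i$ largest singular values, equivalently the top-$i$ eigenspace of the positive self-adjoint operator $(A^n(x))^*A^n(x)$. The first key step is a perturbation estimate of Davis--Kahan type: writing $A^m(x)=A^{m-n}(T^n x)\cdot A^n(x)$ and using that $A^n(x)$ already has a wide gap $\sigma_{i+1}/\sigma_i<C\tau^n$ between its $i$-th and $(i{+}1)$-th singular values, one shows that left-multiplying $A^n(x)$ by $A^{m-n}(T^n x)$ moves the top-$i$ right singular subspace in $\mathrm{Gr}(i,V_x)$ by at most $O(C\tau^n)$, a quantity independent of the size of the perturbing factor. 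This will force $(E_n(x))_{n\ge 1}$ to be Cauchy uniformly in $x$, so $E_x:=\lim_n E_n(x)$ exists and is continuous. Invariance $A(x)E_x=E_{Tx}$ will follow by comparing $A(x)E_{n+1}(x)$ with $E_n(Tx)$, both of which asymptotically optimise essentially the same long forward product. To produce the complementary bundle $F$ I would run the same construction on $\cA^{-1}$ at index $d-i$, exploiting $\sigma_{i+1}(A^n(x))/\sigma_i(A^n(x))=\sigma_{d-i+1}(A^{-n}(T^n x))/\sigma_{d-i}(A^{-n}(T^n x))$. Transversality of $E$ and $F$ then follows from the gap, and the domination inequality is recovered from $\m(A^n(x)|E_x)\asymp\sigma_i(A^n(x))$ and $\|A^n(x)|F_x\|\asymp\sigma_{i+1}(A^n(x))$, with the errors controlled by the convergence rates of $E_n$ and $F_n$.

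\textbf{Main obstacle.} The technical heart is the perturbation estimate for the top-$i$ singular subspace under an arbitrary left-multiplication, controlled purely in terms of the gap ratio $\sigma_{i+1}/\sigma_i$ and not the norm of the perturbing factor; that is what makes the Cauchy property uniform in $x$. Should a direct argument become cumbersome, an attractive fallback is to reduce to the case $i=1$ via the $i$-th exterior power: the top two singular values of $\wedge^i B$ are $\sigma_1\cdots\sigma_i$ and $\sigma_1\cdots\sigma_{i-1}\sigma_{i+1}$, so their ratio equals $\sigma_{i+1}(B)/\sigma_i(B)$, and hypothesis (2) for $\cA$ at index $i$ becomes hypothesis (2) for $\wedge^i\cA$ at index $1$. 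One then constructs a one-dimensional fast bundle in $\wedge^i V$, which is automatically decomposable as a limit of pure wedges of singular vectors, and descends to the desired $i$-plane in $V$.
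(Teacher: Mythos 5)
Your forward implication via the Courant--Fischer characterizations $\sigma_i(B)=\max_{\dim W=i}\m(B|W)$ and $\sigma_{i+1}(B)=\min_{\dim W=d-i}\|B|W\|$ is correct, and is in fact a bit cleaner than the paper's argument (which orthogonalizes the splitting and identifies the singular directions with $E_x\cup F_x$). Your overall strategy for the converse --- build the bundles as Grassmannian limits of finite-time singular subspaces, with a Cauchy estimate driven by the gap, run the construction in both time directions --- is also the paper's. But you have chosen the wrong finite-time approximants. The top-$i$ right singular subspace $E_n(x)$ of $A^n(x)$ does converge, but its limit is the \emph{orthogonal complement of the slow bundle} $F_x$, which is not invariant. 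Already for the constant cocycle generated by $A=\bigl(\begin{smallmatrix}2&1\\0&1/2\end{smallmatrix}\bigr)$, the top right singular directions of $A^n$ converge to the orthocomplement of the contracting eigendirection $(2,-3)$, i.e.\ to $\mathrm{span}(3,2)$, not to the expanding eigendirection $(1,0)$; and $\mathrm{span}(3,2)$ is not $A$-invariant. So your proposed verification of $A(x)E_x=E_{Tx}$ by comparing $A(x)E_{n+1}(x)$ with $E_n(Tx)$ cannot succeed. The objects whose limits are invariant are the \emph{bottom} singular subspaces: the bottom-$(d-i)$ right singular subspace of $A^n(x)$ tends to $F_x$, and the bottom-$i$ right singular subspace of $A^{-n}(x)$ (equivalently the top-$i$ \emph{left} singular subspace of $A^n(T^{-n}x)$) tends to $E_x$. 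This is exactly what the paper does, and the same correction is needed in your exterior-power fallback.

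The second gap is the last step. Even with the correct $E$ and $F$, the domination inequality does not follow from ``$\m(A^n(x)|E_x)\asymp\sigma_i$ and $\|A^n(x)|F_x\|\asymp\sigma_{i+1}$ with errors controlled by the convergence rates.'' Writing a unit vector $f\in F_x$ as $u+s$ with $s$ in the bottom and $u$ in the top singular subspace of $A^n(x)$, one only gets
$\|A^n(x)f\|\le \epsilon_n\,\sigma_1(A^n(x))+(1+\epsilon_n)\,\sigma_{i+1}(A^n(x))$,
where $\epsilon_n$ is the angle between $F_x$ and the $n$-th bottom singular subspace. Since $\sigma_1(A^n(x))/\sigma_{i+1}(A^n(x))$ may grow like $\|A\|^{2n}$, the error term $\epsilon_n\sigma_1$ can swamp $\sigma_{i+1}$ even though $\epsilon_n\to0$ exponentially; no Davis--Kahan-type bound closes this by itself when $i>1$. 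This is precisely why the paper does not argue perturbatively here: it invokes Oseledets' theorem to identify $E_x$ and $F_x$ with sums of Oseledets spaces at regular points, hence to separate the exponents of vectors of $E$ from those of $F$ almost everywhere, and then a Krylov--Bogoliubov compactness argument on the bundle with fibre $P(E_x)\times P(F_x)$ to upgrade this to the uniform inequality. You need a substitute for this step; as written it is asserted rather than proved.
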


Theorem~\ref{t.yoccoz} was already proved in dimension $2$
by Yoccoz in \cite{Yoccoz}.

\begin{rem}
One easily extends the theorem to continuous-time cocycles $t\in \R\to\cA^t$, replacing the $n$-th iteration $A^n(x)$ by the time-$n$ cocycle.
\end{rem}

\begin{rem}
One can also consider the case where $T$ is non-invertible and the $A(x)$ are all invertible, considering the natural extension $\tilde{\cA}$ of $\cA$ that sends any orbit $(v_i)_{i\in \Z}$ of $\cA$ on its left shift $(v_{i+1})_{i\in \Z}$, and saying that $\cA$ admits a domination of index $i$ if and only if the invertible cocycle $\tilde{\cA}$ admits a domination of index $i$. From Theorem~\ref{t.yoccoz} and the fact that the products of matrices that appear in the iterates $\cA^n$ are the same as those in $\tilde{\cA}^n$ one easily deduces that the theorem also holds $\cA$.
\end{rem}

\subsection*{Proof}
First of all, let us see that the
validity of each condition in Theorem~\ref{t.yoccoz} does not depend on the choice of the inner product.
This independence is obvious for condition~(a), and for condition~(b) it is a consequence of the following lemma:

\begin{lemma}\label{l.exterior}
For every $d\in \N$ and $C_1>1$ there is $C_2>1$
such that
for any linear maps $A$, $N: \R^d \to \R^d$,
with $\|N^{\pm 1}\| \le C_1$, we have
$$
C_2^{-1} \le \frac{\sigma_k(NA)}{\sigma_k(A)} \le C_2 \quad \text{for each $k=1$, \ldots, $d$.}
$$
The same is true if we replace $NA$ by $AN$.
\end{lemma}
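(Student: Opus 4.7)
The plan is to prove the lemma via the min--max (Courant--Fischer) characterization of singular values:
$$
\sigma_k(B) \;=\; \max_{\substack{W \subset \R^d \\ \dim W = k}}\; \min_{\substack{v \in W \\ \|v\| = 1}} \|Bv\|.
$$
Using this it suffices to compare the quadratic form $v \mapsto \|Bv\|$ before and after multiplication by $N$.

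First I would treat the left-multiplication case $NA$. Since $\|N\| \le C_1$ and $\|N^{-1}\| \le C_1$, one has the pointwise double inequality
$$
C_1^{-1} \|Av\| \;\le\; \|NAv\| \;\le\; C_1 \|Av\|
\qquad \text{for every } v \in \R^d.
$$
Taking minima over unit vectors $v$ in a $k$-dimensional subspace $W$, and then the maximum over such subspaces, immediately yields
$$
C_1^{-1}\, \sigma_k(A) \;\le\; \sigma_k(NA) \;\le\; C_1\, \sigma_k(A).
$$

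For the right-multiplication case $AN$, I would reduce to the previous case by adjoints. Since a matrix and its transpose have the same singular values, $\sigma_k(AN) = \sigma_k(N^* A^*)$, and the adjoint $N^*$ satisfies $\|(N^*)^{\pm 1}\| = \|N^{\mp 1}\| \le C_1$. Applying the first part to $N^*$ and $A^*$ in place of $N$ and $A$ gives
$$
C_1^{-1}\, \sigma_k(A) = C_1^{-1}\, \sigma_k(A^*) \;\le\; \sigma_k(N^* A^*) = \sigma_k(AN) \;\le\; C_1\, \sigma_k(A^*) = C_1\, \sigma_k(A).
$$
So $C_2 = C_1$ works in both cases.

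There is essentially no obstacle here: the lemma is a direct consequence of the variational characterization of $\sigma_k$ together with the elementary perturbation bound for $\|Nv\|$. The only minor subtlety is remembering to handle $AN$ by passing to adjoints rather than trying to estimate $\|ANv\|$ directly in terms of $\|Av\|$, which cannot be done vector by vector.
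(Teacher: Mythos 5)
Your argument is correct, but it is genuinely different from the paper's. The paper proves the lemma through exterior powers: it uses the identity $\|\wed^k A\| = \sigma_1(A)\cdots\sigma_k(A)$ and the submultiplicative bound $\|N^{-1}\|^{-k}\,\|\wed^k A\| \le \|\wed^k(NA)\| \le \|N\|^k\,\|\wed^k A\|$, and then the bound on $\sigma_k(NA)/\sigma_k(A)$ comes from writing $\sigma_k(B)=\|\wed^k B\|/\|\wed^{k-1}B\|$ and taking ratios, which yields a constant of the order $C_1^{2k-1}$. You instead use the Courant--Fischer characterization $\sigma_k(B)=\max_{\dim W=k}\min_{v\in W,\,\|v\|=1}\|Bv\|$ together with the pointwise inequality $C_1^{-1}\|Av\|\le\|NAv\|\le C_1\|Av\|$; this is more elementary (no exterior-power formalism) and gives the sharper constant $C_2=C_1$, and your reduction of the $AN$ case to the $NA$ case by passing to adjoints is valid since transposition preserves singular values. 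Two small remarks: the identity you state should read $\|(N^*)^{\pm 1}\|=\|N^{\pm 1}\|$ rather than $\|N^{\mp 1}\|$ (because $(N^*)^{-1}=(N^{-1})^*$ and $\|M^*\|=\|M\|$), though this is immaterial since both norms are bounded by $C_1$; and the $AN$ case could in fact also be handled directly with the min--max formula by substituting $w=Nv$ and ranging over the subspaces $NW$, using $\m(N)\ge C_1^{-1}$ --- only a vector-by-vector comparison of $\|ANv\|$ with $\|Av\|$ is impossible. For the application in the proof of Theorem~\ref{t.yoccoz} any constant $C_2$ depending only on $C_1$ and $d$ suffices, so both routes serve equally well there.
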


\begin{proof}
We will use a few facts about exterior powers; see e.g.~\cite{Arnold_RDS}.
Let $\wed^k \R^d$ indicate the $k$-th exterior power of $\R^d$.
If $A: \R^d \to \R^d$ is a linear map, then it induces a linear map
$\wed^k A: \wed^k \R^d \to \wed^k \R^d$.
The Euclidian inner product in $\R^d$ induces an inner product in $\wed^k \R^d$,
and the corresponding operator norm satisfies
$$
\|\wed^k A\| = \sigma_1 (A) \cdots \sigma_k(A) \, .
$$
Given another linear map $N : \R^d \to \R^d$, we have
$$
\|N^{-1}\|^{-k} \, \|\wed^k A\| \le  \|\wed^k (NA)\| \le \|N\|^k \, \|\wed^k A\|
$$
The lemma follows.
\end{proof}

Now we prove separately the two implications of Theorem~\ref{t.yoccoz}:

\begin{proof}[Proof that (a) implies (b)]
First assume $E \oplus F$ is a dominated splitting of index~$i$.
As an immediate consequence of the definition, the angle between $E$ and $F$ is bounded from below.
By making a continuous change of the inner product
(which is allowed due to the remarks above),
we can assume that $E_x$ and $F_x$ are orthogonal at every point $x \in X$.
Then for every point $x$, the map $A(x)^*$ sends the subspaces $E_{Tx}$ and $F_{Tx}$
respectively to $E_x$ and $F_x$.
Hence for any $n$,
the eigenvectors of $[(A^n(x))^* A^n(x)]^{1/2}$
belong to $E_x \cup F_x$.
These are of course the singular directions of $A^n(x)$.
Once the ratio
$$
\frac{\|A^n(x)| F_x\|}{\m(A^n(x)|E_x)}
$$
becomes bigger than $1$, the singular directions of $A^n(x)$
corresponding to the $i$ biggest singular values (counted with multiplicity)
are contained in $E_x$, and the others are contained in $F_x$.
In particular, the ratio above equals $\sigma_i(A^n(x))/ \sigma_{i+1}(A^n(x))$.
Condition (b) follows.
\end{proof}

Despite the purely topological nature of Theorem~\ref{t.yoccoz},
our proof makes use of the measure-theoretic Theorem of Oseledets; see e.g.~\cite{Arnold_RDS}.
It asserts that there is a set $R$ of full probability
(i.e., $R$ is a Borel set and $\mu(R)=1$ for every $T$-invariant Borel probability measure $\mu$)
such that for all $x \in R$ the following holds:
\begin{itemize}
\item For every non-zero vector $v \in V_x$, the following limits exist:
\begin{align*}
\lambda^+(x,v) = \lim_{n \to + \infty} \frac 1n \log \|A^n(x) \cdot v\|, \\
\lambda^-(x,v) = \lim_{n \to - \infty} \frac 1n \log \|A^n(x) \cdot v\|.  
\end{align*}
Moreover the sets of values attained by both are the same and form a finite set
$\{\chi_1(x) > \cdots  > \chi_k(x) \}$, where $k=k(x)$.

\item There is a splitting
$V_x = E^1_x \oplus \cdots \oplus E_x^k$ such that for each non-zero $v\in V_x$,
\begin{align*}
\lambda^+(x,v) \le \chi_j(x) \quad &\text{iff} \quad v \in E^j_x \oplus \cdots \oplus E^k_x \, , \\
\lambda^-(x,v) \ge \chi_j(x) \quad &\text{iff} \quad v \in E^1_x \oplus \cdots \oplus E^j_x \, .
\end{align*}

\item The linear maps $[A^{n}(x)^* A^n(x)]^{1/2n}$ converge as $n \to + \infty$
to a linear map whose eigenvalues
are $\exp \chi_1(x) > \cdots > \exp \chi_k(x)$.
Letting $\hat{E}_x^1$, \ldots, $\hat{E}_x^k$ denote the respective eigenspaces,
we have, for each $j = 1,\ldots, k$,
$$
\hat{E}^j_x \oplus \cdots \oplus \hat{E}^k_x =  E^j_x \oplus \cdots \oplus E^k_x \, .
$$
The analogous property for negative time also holds.
\end{itemize}
The points $x$ in $R$ are called \emph{regular},
the numbers $\chi_j(x)$ are called the \emph{Lyapunov exponents};
and $E^j_x$ are called the \emph{Oseledets spaces}.
The \emph{multiplicity} of the exponent $\chi_j(x)$ is defined as the dimension of the corresponding space $E^j_x$.
We rewrite the Lyapunov exponents repeated according to multiplicity as
$\lambda_1(x) \ge \cdots \ge \lambda_d(x)$.
Notice that if $x$ is regular then
$$
\lim_{n \to +\infty} \frac1n \log \sigma_i(A^n(x)) = \lambda_i(x).
$$

We can now end the proof of Theorem~\ref{t.yoccoz}:

\begin{proof}[Proof that (b) implies (a):]
Assume condition~(b) holds for some fixed~$i$.
For each $x \in X$ and every sufficiently large $n$,
we have $\sigma_i(A^n(x))>\sigma_{i+1}(A^n(x))$.
Then let $U^{(n)}_x$ be the eigenspace of $[(A^n(x))^* A^n(x)]^{1/2}$ associated to the singular values
$\sigma_1$, \ldots, $\sigma_i$, and let $S^{(n)}_x$ be the eigenspace associated to the other singular values.
These two spaces are orthogonal.

\begin{claim}
The sequence of spaces $S^{(n)}_x$ converges to some $F_x$, uniformly with respect to $x \in X$.
\end{claim}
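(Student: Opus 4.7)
The plan is to show that $(S_x^{(n)})_{n \ge 1}$ is Cauchy in $\mathrm{Gr}(d-i, V_x)$, uniformly in $x \in X$; the limit $F_x$ will then exist as the uniform limit. (Since each $x \mapsto S_x^{(n)}$ is continuous for $n$ large enough that $\sigma_i(A^n(x)) > \sigma_{i+1}(A^n(x))$, the limit family $F_x$ will automatically depend continuously on $x$.)

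The key quantitative step is a one-step estimate of the form $d(S_x^{(n+1)}, S_x^{(n)}) \le K\tau^n$ for some constant $K$ independent of $x$, where $d(\cdot,\cdot)$ denotes the one-sided Grassmannian gap measured by orthogonal projection. Let $M := \sup_{x \in X} \max(\|A(x)\|, \|A(x)^{-1}\|)$, finite by compactness, and take a unit vector $v \in S_x^{(n+1)}$. The min-max characterization of singular values yields the submultiplicative bound $\sigma_{i+1}(A^{n+1}(x)) = \sigma_{i+1}(A(T^n x)\,A^n(x)) \le \|A(T^n x)\|\, \sigma_{i+1}(A^n(x)) \le M\, \sigma_{i+1}(A^n(x))$. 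Combining this with $A^n(x)\,v = A(T^n x)^{-1} A^{n+1}(x) v$ and $v \in S_x^{(n+1)}$ gives
$$
\|A^n(x)\, v\| \le M \cdot \sigma_{i+1}(A^{n+1}(x)) \le M^2\, \sigma_{i+1}(A^n(x)).
$$
Now decompose $v = s + u$ orthogonally with $s \in S_x^{(n)}$ and $u \in U_x^{(n)}$. Because $S_x^{(n)}$ and $U_x^{(n)}$ are spanned by right singular vectors of $A^n(x)$ corresponding to disjoint singular values, their images are orthogonal in $V_{T^n x}$, so
$$
\|A^n(x)\, v\|^2 = \|A^n(x)\, s\|^2 + \|A^n(x)\, u\|^2 \ge \sigma_i(A^n(x))^2\,\|u\|^2.
$$
Combining with hypothesis~(b),
$$
\|u\| \le M^2\,\frac{\sigma_{i+1}(A^n(x))}{\sigma_i(A^n(x))} < M^2 C\,\tau^n.
$$
Since $\|u\|$ is precisely the distance from $v$ to $S_x^{(n)}$, we conclude $d(S_x^{(n+1)}, S_x^{(n)}) \le M^2 C\,\tau^n$, uniformly in $x$.

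Telescoping with the triangle inequality then gives, for every $K \ge 0$,
$$
d(S_x^{(n+K)}, S_x^{(n)}) \le \sum_{k=0}^{K-1} M^2 C\, \tau^{n+k} \le \frac{M^2 C}{1-\tau}\,\tau^n,
$$
which tends to $0$ uniformly in $x$ and $K$ as $n \to \infty$, so $(S_x^{(n)})$ is uniformly Cauchy and thus converges uniformly to some $F_x$. I do not foresee any serious obstacle: this is essentially a clean telescoping argument, crucially exploiting the \emph{exponential} nature of the gap in hypothesis~(b) to make the series summable. The only minor subtleties are the submultiplicative singular value estimate $\sigma_{i+1}(BA) \le \|B\|\,\sigma_{i+1}(A)$ and the observation that, since the subspaces have equal dimension, a one-sided gap bound controls the symmetric Grassmannian metric.
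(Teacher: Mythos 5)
Your proof is correct and follows essentially the same route as the paper: both arguments bound the (sine of the) angle between $S^{(n)}_x$ and $S^{(n+1)}_x$ by comparing the action of the cocycle on the two orthogonal singular splittings, obtain a uniform bound of order $\tau^n$ from hypothesis (b), and then sum the resulting geometric series. The only cosmetic difference is that you take $v\in S^{(n+1)}_x$ and use the submultiplicative estimate $\sigma_{i+1}(BA)\le\|B\|\,\sigma_{i+1}(A)$, whereas the paper takes $v\in S^{(n)}_x$ and invokes its Lemma~\ref{l.exterior} to compare $\sigma_i(A^{n+1}(x))$ with $\sigma_i(A^n(x))$.
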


\begin{proof}
Let us estimate the angle $\alpha_n(x)$ between $S^{(n)}_x$ and $S^{(n+1)}_x$.
Let $v \in S^{(n)}_x$ be the farthest unit vector from $S^{(n+1)}_x$.
Let $w$ be the projection of $v$ on $U^{(n+1)}_x$ along $S^{(n+1)}_x$.
Since the spaces $S^{(n+1)}_x$ and $U^{(n+1)}_x$ are orthogonal and so are their images by $A^{n+1}(x)$, we have
$\|w\| = \sin \alpha_n(x)$ and
$$
\|A^{n+1}(x) \cdot v\| \ge \|A^{n+1}(x) \cdot w\| \ge \sigma_i(A^{n+1}(x)) \, \|w\|.
$$
On the other hand,
$$
\|A^{n+1}(x) \cdot v\| \le \|A(T^n x)\| \cdot \|A^n(x) \cdot v\| \le \|A\| \sigma_{i+1}(A^n(x)).
$$
Thus
$$
\sin \alpha_n(x) = \|w\| \le \frac{\|A\| \, \sigma_{i+1}(A^n(x))}{\sigma_i(A^{n+1}(x))}
$$
Now using the assumption~(b) and Lemma~\ref{l.exterior}, we see
that there exists $C_1>0$ such that $\alpha_n(x) < C_1 \tau^n$ for all $x \in X$ and $n \ge 0$.
The claim follows.
\end{proof}

Now assume $x$ is a regular point.
Then, by assumption~(b)
there is a gap between the Lyapunov exponents:
$\lambda_i(x) -\lambda_{i+1}(x) \ge \log \tau^{-1}$.
Take $j=j(x)$ with $1 \le j \le k=k(x)$ such that $\chi_j(x) = \lambda_i(x)$.
It follows from the the definition of the  spaces $S^{(n)}_x$ that
their limit $F_x$ equals
$E^{j+1}_x \oplus \cdots \oplus E^k_x$.
In particular $\lambda^+(x,f) \le \chi_{j+1}(x) = \lambda_{i+1}(x)$
for every non-zero vector $f \in F_x$.

Next, notice that assumption (b) implies
$$
\frac{\sigma_{d+1-i}(A^{-n}(x))}{\sigma_{d-i}(A^{-n}(x))} < C \tau^n, \quad
\text{for all $x\in X$ and $n \ge 0$.}
$$
Thus we can apply a symmetric argument iterating backwards
and conclude that for every $x \in X$ there is a space $E_x$ of dimension $i$,
depending continuously on $x$, that equals $E^1_x \oplus \cdots \oplus E^j_x$
whenever $x$ is a regular point.
If $e$ is a non-zero vector in $E_x$, then $\lambda^+(x,e)$ (as well as $\lambda^-(x,e)$)
is at least $\lambda_i(x)$.

Hence for every regular point $x$ and every non-zero vectors $e \in E_x$, $f\in F_x$,
$$
\lim_{n\to + \infty} \frac 1n \log \frac{\| A^n(x) \cdot f \|}{\| A^n(x) \cdot e \|}
\text{ exists and is at most $\log \tau$.}
$$
Now consider the continuous fibre bundle $\hat{X} \to X$ whose fibre over $x$
is $P(E_x) \times P(F_x)$, where the letter $P$ denotes projectivization.
Let $\hat{T}: \hat X \to \hat X$ be the obvious bundle map induced by $\cA$.
Let $\psi: \hat{X} \to \R$ be given by
$$
\psi(x, \bar e, \bar f) = \log \frac{\| A(x) \cdot f \|}{\| A(x) \cdot e \|}
$$
where $e$, $f$ are unit vectors in the directions of $\bar e$, $\bar f$, respectively.
The Birkhoff averages of $\psi$ under $\hat T$ are:
$$
\psi_n(x , \bar e, \bar f) = \frac 1n \sum_{m=0}^{n-1} \psi\circ\hat{T}^m (x, \bar e, \bar f)  =
\frac 1n \log \frac{\| A^n(x) \cdot f \|}{\| A^n(x) \cdot e \|} \, .
$$
Therefore for every $\hat T$-invariant probability measure $\nu$ in $\hat{X}$, the
sequence $\psi_n$ converges at $\nu$-almost every point,
and the limit is at most $\log \tau$.
Since $\hat{T}$ is a homeomorphism of the compact space $\hat X$,
and $\psi$ is continuous,
it follows from a standard Krylov-Bogoliubov argument
that given any $\tau<\kappa<1$, there exists $n_0$ such that
for every $n \ge n_0$ we have $\psi_n < \log \kappa$
\emph{uniformly} over $\hat{X}$.
For $n=n_0$, this means that
$$
\frac{\|A^{n_0}(x) | F_x\|}{\m(A^{n_0}(x) | E_x)} < \kappa \quad \text{for all $x \in X$}.
$$
This proves that $V = E \oplus F$ is
indeed a splitting, and is dominated for $\cA$.
\end{proof}

\section{Dominated Sets} \label{s.sets}

In this part, $\Sigma$ is a compact subset of $\GL(d,\R)$.

\begin{defn}
The set
$\Sigma$ is called \emph{dominated of index $i$} iff
there exist $C>0$ and $0<\tau<1$ such that for any finite sequence
$A_1,\ldots,A_N$ in $\Sigma$ we have $$\frac{\sigma_{i+1}(A_1 \cdots A_N)}{\sigma_i(A_1 \cdots A_N)} < C \tau^N \, .$$
where $\sigma_i(M)$ denotes, as before, the $i$-th singular value of the matrix $M$.
\end{defn}

An $i$-dimensional vector subspace of $\R^d$ is called an \emph{$i$-plane}.
The Grassmannian $G(i,d)$ is defined as the set of $i$-planes.
The linear group $GL(d,\R)$ naturally acts on $G(i,d)$. Given a subset $C \subset G(i,d)$, we denote by $\Sigma^*C$ the image of $C$ by the action of $\Sigma$, that is the subset $\{A \cdot E \; ; A\in \Sigma, E\in C\}$ of $G(i,d)$. A subset $C$ of $G(i,d)$ is said to be {\em (strictly) invariant} by $\Sigma$ if  (the closure of) $\Sigma^*C$ is included in (the interior of) $C$.

For any $C\subset G(i,d)$ we denote by $PC$ the subset of $P\R^d =  G(1,d)$ of directions that are included in some element of $C$.

\begin{thm} \label{t.char}
Let $\Sigma \subset \GL(d,\R)$ be a compact set. The following four statements are equivalent:
\begin{itemize}
\item[(a)] The set $\Sigma$ is dominated of index $i$.
\item[(b)] For any dynamics $T:X \to X$ and any map $A: X \to \GL(d,\R)$ whose image is contained in $\Sigma$,
the cocycle $(T,A)$ has a dominated splitting of index $i$.
\item[(c)] There exists a non-empty subset $C\subset G(i,d)$ that is strictly invariant by $\Sigma$, and there is a $(d-i)$-plane that is transverse to all elements of $C$.
\item[(d)] There exists a subset $C\subset P\R^d$ that is strictly invariant by $\Sigma$, that contains the projection $PE$ of some $i$-plane $E$ and that does not intersect the projection $PF$ of some $(d-i)$-plane $F$.
\end{itemize}
\end{thm}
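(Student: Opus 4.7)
My plan is to establish the cycle (a) $\Leftrightarrow$ (b), (c) $\Leftrightarrow$ (d), (b) $\Rightarrow$ (c), and (c) $\Rightarrow$ (b). Using Theorem~\ref{t.yoccoz} freely, the equivalence (a) $\Leftrightarrow$ (b) reduces to working with the ``universal'' shift cocycle on $X = \Sigma^{\Z}$ with $A(\omega)=\omega_0$: iterates of any cocycle valued in $\Sigma$ are products of elements of $\Sigma$, so (a) directly feeds the hypothesis of Theorem~\ref{t.yoccoz} to yield (b); and conversely every finite product arises as an iterate of the shift cocycle, so (b) plus Theorem~\ref{t.yoccoz} gives (a).

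The equivalence (c) $\Leftrightarrow$ (d) is essentially a projection--lift pair. Given (c), set $\tilde C := \bigcup_{E\in C} PE \subset P\R^d$; strict invariance transfers from $G(i,d)$ to $P\R^d$ since a Grassmannian neighborhood of $AE\in\interior(C)$ projects to a projective neighborhood of $P(AE)$ inside $\tilde C$, while $PF^*$ misses $\tilde C$ by transversality. Conversely, given (d), set $C := \{E\in G(i,d) : PE\subset \tilde C\}$; this contains the $i$-plane prescribed by (d), is strictly invariant by continuity of $E\mapsto PE$, and the $(d-i)$-plane $F$ from (d) is transverse to every element of $C$ because $PE\cap PF \subset \tilde C\cap PF = \emptyset$ forces $E\cap F=0$.

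For (b) $\Rightarrow$ (c), apply (b) to the shift cocycle and obtain a dominated splitting $E\oplus F$. The crucial structural fact is that $E_\omega$ depends only on the past $(\omega_n)_{n\le -1}$ (since it is built from backward singular subspaces, as in the proof of Theorem~\ref{t.yoccoz}) while $F_\omega$ depends only on the non-negative coordinates. Fix $\omega^*\in\Sigma^{\Z}$ and set $F^* := F_{\omega^*}$: splicing the past of an arbitrary $\omega$ with the non-negative part of $\omega^*$ produces some $\tilde\omega$ with $E_{\tilde\omega}=E_\omega$ and $F_{\tilde\omega}=F^*$, whence $E_\omega\cap F^* = 0$. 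The compact set $C_0 := \{E_\omega : \omega\in\Sigma^{\Z}\}$ is $\Sigma$-invariant (for $A\in\Sigma$, prepending $A$ to the past of any $\omega$ turns $E_\omega$ into $AE_\omega$) and transverse to $F^*$; strict invariance is obtained by thickening $C_0$ to a small cone neighborhood in the affine chart of $G(i,d)$ based at $F^*$, using an adapted metric (\cite{G}) so that the one-step contraction provided by domination preserves the thickening strictly.

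The main obstacle is (c) $\Rightarrow$ (b). For a cocycle $(T,A)$ valued in $\Sigma$, set
\[ E_x := \bigcap_{N\ge 0} A^N(T^{-N}x) \cdot C, \]
a nested intersection of compacts, non-empty by strict invariance. The core task is to show this intersection is a single $i$-plane depending continuously on $x$, with the required exponential domination --- all of which follow from a uniform exponential contraction $\diam(\Sigma^N\cdot C) \le C_1\tau^N$ in $G(i,d)$. Unlike the $2$-dimensional case of \cite{ABY}, the multicone $C$ may fail to be convex in any affine chart (as the authors demonstrate in Section~\ref{s.example}), so the classical Birkhoff--Hopf contraction for Hilbert metrics on convex cones is unavailable. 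My plan is to work in the affine chart of $G(i,d)$ transverse to $F^*$, in which $C$ is bounded, and to combine compactness of $\Sigma$ and $C$ with the uniform margin coming from $\Sigma\cdot C\subset\interior(C)$ to build an adapted Finsler-type metric in which some iterate $\Sigma^{N_0}$ is a uniform strict contraction on $C$; a symmetric dual construction produces the complementary $F_x$, and the contraction rate supplies the domination exponent. Theorem~\ref{t.yoccoz} then recovers (a), closing the cycle.
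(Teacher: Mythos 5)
Most of your architecture is sound and matches or harmlessly deviates from the paper: the reduction of (a)$\Leftrightarrow$(b) to the universal shift cocycle plus Theorem~\ref{t.yoccoz} is exactly the paper's route; your direct proof of (d)$\Rightarrow$(c) via $C':=\{E\in G(i,d): PE\subset C\}$ is correct and is a genuine (minor) deviation, since the paper instead closes the cycle as (d)$\Rightarrow$(b')$\Rightarrow$(c); and your (b)$\Rightarrow$(c) sketch (past/future dependence of the bundles, splicing sequences to get transversality, thickening the compact set of unstable planes so that strict invariance holds) is the paper's argument in spirit, although the paper makes the key quantitative point precise by constructing an adapted metric $d_\infty=\sum_n d_n$ on the planes transverse to $C^s$, whose summability uses the domination already available in that implication.

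The genuine gap is in (c)$\Rightarrow$(b), which is the heart of the theorem. Setting $E_x=\bigcap_{N\ge 0}A^N(T^{-N}x)\cdot C$ is fine, but you never prove that this nested intersection is a single $i$-plane, nor the asserted uniform contraction $\diam(\Sigma^N\cdot C)\le C_1\tau^N$: you correctly note that the Birkhoff--Hopf/Hilbert-metric mechanism is unavailable because $C$ need not be (semi)convex, and then only announce a plan to ``build an adapted Finsler-type metric in which some iterate of $\Sigma$ is a uniform strict contraction.'' No such construction is given, and none follows routinely from compactness plus strict invariance; the existence of a metric uniformly contracted on $C$ is essentially equivalent to the domination you are trying to establish (note that the paper builds its adapted Grassmannian metric only in the (b')$\Rightarrow$(c) direction, \emph{after} domination is known, precisely because domination supplies the exponential convergence needed for the series to converge). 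The paper's substitute for this missing step is a projective cross-ratio argument (Bonatti's idea): if some fibre $\cC^\infty_x$ of the limit cone field were not a projective subspace, one finds two boundary points of $\cC^\infty_x$ on a projective line, approximates them by boundary points of $\cC^n_x$, and pulls the quadruples back $n$ steps; invariance of the cross-ratio under the linear maps then forces the distance between $\partial\cC$ and $\partial(\Sigma^*\cC)$ to vanish, contradicting strict invariance and compactness. This yields invariant bundles $E$ (from $\cC$) and $F$ (from the complementary cone field under the inverse cocycle) of dimensions exactly $i$ and $d-i$ with angle bounded below, and domination is then deduced from the standard cone-field criterion of~\cite{BDV_livro} (for large $n$ the iterates of $\cC$ lie in an $\epsilon$-cone around $E$). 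Without this argument, or some genuine replacement for it, your proof of (c)$\Rightarrow$(b) does not go through.
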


\begin{scho}
The sets $C$ of conditions (c) and (d) can be chosen so that they have a finite number of connected components (respectively in $G(i,d)$ and $P\R^d$), and their closures are pairwise disjoint.
\end{scho}

Our results motivate the following definition,
corresponding to that given in~\cite{ABY} for $d=2$ and $i=1$:

\begin{defn}
A set $C$ that satisfies condition (c) and has finitely many connected components with pairwise disjoint closures is called an
{\em (unstable) multicone of index~$i$} for~$\Sigma$.
\end{defn}

\subsection*{Proof}
We will first prove  Theorem~\ref{t.char},
and then explain how the scholium follows.

The {\em universal $\Sigma$-valued cocycle}
is the linear cocycle $(T,A)$ on $X\times \R^d$ where $X$ is the set of $\Z$-indexed sequences in $\Sigma$, the dynamics $T\colon X \to X$ is the shift, and $A\colon X\to \GL(d,\R)$ maps $x=(x_k)_{k\in \Z}\in X$ to $x_0$.

It is easy to see that (b) is equivalent to:
\begin{itemize}
\item[(b')] The universal $\Sigma$-valued cocycle admits a dominated splitting of index~$i$.
\end{itemize}

By Theorem~\ref{t.yoccoz}, conditions (a) and (b') are equivalent.
We are going to prove that:
$$
\text{(c)} \ \Rightarrow \ \text{(d)} \ \Rightarrow \ \text{(b')} \ \Rightarrow \ \text{(c).}
$$

\begin{proof}[Proof that (c) implies (d)]
Let $C$ be as in (c). Then the projection $PC$ of $C$ on $P\R^d$ is the good candidate to satisfy (d).
Indeed, since  $C$ strictly invariant by $\Sigma$,
we have
$$
\overline{\Sigma^* PC} =
\overline{P (\Sigma^* C)} \subset P (\overline{\Sigma^* C}) \subset P (\interior C)
\subset \interior (PC).
$$
In other words, $PC$ as well is strictly invariant by $\Sigma$.
\end{proof}

If $a$, $b$, $c$, $d$ are distinct points in the extended real line $\R \cup \{\infty\}$,
then their cross-ratio is defined as the real number
$$
[a,b,c,d] = \frac{c-a}{b-a} \cdot \frac{d-b}{d-c} \, .
$$
By taking projective morphisms, the definition is extended to any projective line.

\begin{proof}[Proof that (d) implies (b')]
Let $C$ be a subset of $P\R^d$ that satisfies all the conditions of (d).
Let $(T,A)$ be the universal $\Sigma$-valued cocycle. Let $\cC$ be the cone field $X\times C$ and $\cC^n$ the $n$-th iterate of $\cC$ by $(T,A)$. Let $\cC^\infty$ be the intersection $\bigcap_{n\in \N}\cC^n$ and denote by $\cC^n_x$ the fibre of $\cC^n$ above $x$, for all $n\in \N\cup\{\infty\}$ .

\begin{claim}
For all $x\in X$ the fibre $\cC^\infty_x$
is a projective vector space.
\end{claim}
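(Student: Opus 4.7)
The plan is to show that $\cC^\infty_x$ is closed under taking the projective line through any two of its distinct points. A routine induction then shows that this property is equivalent to $\cC^\infty_x$ being the projectivization of a linear subspace of $\R^d$: indeed, writing $\tilde\cC^\infty_x=\pi^{-1}(\cC^\infty_x)\cup\{0\}$, closure under projective lines between distinct points is exactly the statement that $\tilde\cC^\infty_x$ is closed under sums of non-parallel vectors, which with obvious scalar-closure and iteration on the number of summands gives linearity.

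\textbf{Setup.} Writing $x=(x_k)_{k\in\Z}$, let $M_n:=A^n(T^{-n}x)=x_{-1}\cdots x_{-n}$, so $\cC^n_x=M_n\cdot C$. Strict $\Sigma$-invariance of $C$ gives the strict nested containment $\overline{\cC^{n+1}_x}\subset\interior\cC^n_x$, whence $\cC^\infty_x=\bigcap_n\cC^n_x$ is a nonempty compact subset of $\interior C$, disjoint from $PF$.

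\textbf{Main step.} Fix distinct $a,b\in\cC^\infty_x$, let $L$ be the projective line through them, and pick any $c\in L$. Setting $a_n:=M_n^{-1}a$, $b_n:=M_n^{-1}b$, $c_n:=M_n^{-1}c$ on the pullback line $L_n:=M_n^{-1}L$, the points $a_n,b_n$ lie in $C$ by hypothesis, and one needs to show $c_n\in C$ for every $n$. The plan is to use the cross-ratio, just introduced, to build a Hilbert-type projective pseudo-distance along projective lines, and exploit strict invariance to get a uniform contraction. Concretely, since $\overline{\Sigma\cdot C}\subset\interior C$ and $\Sigma$ is compact, there is $\lambda<1$ such that for any $A\in\Sigma$ and any projective line $\ell$ meeting $C$, the cross-ratio between points of $A\cdot C\cap A\cdot\ell$ and suitably chosen reference points on $\partial C\cap A\cdot\ell$ is a $\lambda$-contraction of the corresponding cross-ratio on $\ell$. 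If $c_{n_0}\notin C$ for some $n_0$, tracking the cross-ratio between $a_n,b_n,c_n$ and $\partial C\cap L_n$ forward in $n$ from $n_0$ yields exponential growth, contradicting the bound coming from $c=M_n\cdot c_n$ lying on the fixed projective line $L$ with endpoints in $\cC^\infty_x\subset \interior C$.

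\textbf{Main obstacle.} The central difficulty is non-convexity of $C$ (and, as shown in Section~\ref{s.example}, even of its connected components), so that the classical Hilbert projective metric is unavailable and must be replaced by a cross-ratio-based pseudo-distance whose contraction under $\Sigma$ still holds. A secondary obstacle is that for $i\geq 2$ the projective line $L$ need not intersect $PF$, so the reference points for the cross-ratio must be drawn from $\partial C\cap L_n$ rather than from $PF\cap L_n$. One handles both issues by appealing to the multicone structure of the scholium and reducing to the case where $a,b$ lie in the same connected component of $\cC^\infty_x$, where a connected segment of $L_n$ between $a_n$ and $b_n$ produces the required reference points by compactness of $\overline C$.
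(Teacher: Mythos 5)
Your reduction of the claim to closure under projective lines, and your instinct to combine cross-ratios with strict invariance and compactness, both match the paper. But the core of your main step is a genuine gap: you assert the existence of a cross-ratio-based pseudo-distance on which every $A\in\Sigma$ acts as a uniform $\lambda$-contraction relative to reference points on $\partial C\cap \ell$. This is precisely the Birkhoff--Hilbert mechanism, and it is exactly what non-convexity of $C$ obstructs: $C\cap\ell$ may be disconnected or may have no boundary point on the relevant side of the segment between $a_n$ and $b_n$, and Section~\ref{s.example} shows that the components of a multicone need not even be semiconvex, so no such contraction is available off the shelf. You acknowledge the problem, but your proposed fix --- invoking ``the multicone structure of the scholium'' and reducing to the case where $a,b$ lie in one connected component of $\cC^\infty_x$ --- is circular (the Scholium presupposes that $\Sigma$ is dominated, which is what the implication (d)$\Rightarrow$(b') is in the course of establishing), and in any case it covers neither arbitrary pairs $a\neq b$ nor the points $c\in L$ lying far from that component.

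The paper sidesteps the need for any contraction rate. Arguing by contradiction, it takes two distinct points $e\neq f$ in the boundary of $\cC^\infty_x\cap D$ relative to the line $D$, approximates them by boundary points $e_n, f_n\in\partial_D\cC^n_x$, and considers the quadruple $(e_{n+1},e_n,f_n,f_{n+1})$. Its cross-ratio tends to infinity for purely topological reasons: $e_n,e_{n+1}\to e$ and $f_n,f_{n+1}\to f$ with $e\neq f$, so the numerators in the defining formula stay bounded away from zero while the denominators vanish. Projective invariance of the cross-ratio then transports each quadruple back $n$ steps to a configuration with two points on $\partial\cC$ and two on $\partial\cC^1$, where the cross-ratio must remain bounded because these two boundaries are a positive distance apart, by compactness and strict invariance. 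If you want to salvage your approach, this qualitative use of the cross-ratio on a carefully chosen quadruple is the missing idea; no pseudo-metric and no contraction constant are needed.
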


This claim and its proof are adapted from an idea of Christian Bonatti for diffeomorphisms.

\begin{proof}[Proof of the claim]
It suffices to prove that for any pair $u\neq v$ of points in $\cC^\infty_x$, the projective line $D$ spanned by them is contained in $\cC^\infty_x$.
We reason by contradiction. Assume that $D$ is not contained in $\cC^\infty_x$. Then by compactness we find two vectors $e\neq f$  in the boundary $\partial_D \cC^\infty_x$ of $\cC^\infty_x\cap D$ relative to $D$. By definition of $\cC^\infty$, we find sequences $e_n$, $f_n$ in $\partial_D \cC^n_x$ such that $e_n$ tends to $e$ and $f_n$ to $f$. Then the cross-ratio $[e_{n+1},e_n,f_n,f_{n+1}]$ tends to $\infty$ as $n$ tends to $\infty$. Taking the $n$-th preimage of these quadruplets by $(T,A)$, we find sequences $a_n$, $d_n$ in $\partial \cC^1$ and $b_n$, $c_n$ in $\partial \cC$  such that the cross-ratio $[a_n, b_n, c_n, d_n]$ is defined and tends to infinity.
This would imply in particular that the distance between $\partial \cC^1$ and $\partial \cC$ is zero, which is absurd, by compactness and strict invariance of the cone field.
The claim is proved.
\end{proof}

We write $\cC^\infty_x = PE_x$. Notice that the complement cone field $\cD = (X\times P\R^d) \setminus \cC$ is strictly invariant for the inverse cocycle $(T,A)^{-1}$. Therefore we build an invariant field of vector spaces $\cD^{\infty}_x =PF_x$.
The field $\cC$ contains an $i$-dimensional bundle and $\cD$ contains a $(d-i)$-dimensional bundle, therefore $E$ and $F$ are vector bundles of respective dimensions $i$ and $d-i$. Besides the angle between the two bundles is bounded from below by strict invariance of the cone fields.

Let $\Gamma^\epsilon$ be the cone around $E$ formed by the directions $v=v_E+v_F$ where $v_E\in E$, $v_F\in F$ and $\|v_F\|\leq \epsilon \|v_E\|$. For $\epsilon$ small enough, $\Gamma^\epsilon$ is in the interior of $\cC$. By definition of $E$, for $n$ great enough $\Gamma^\epsilon$ contains the $n$-th iterate of $\cC$.
As it is well-known (see \cite{BDV_livro}), this implies that $E\oplus F$ is a dominated splitting.
\end{proof}

Avila, Bochi, and Yoccoz \cite{ABY} already showed that (b') implies (c) in dimension 2.
Building an adapted metric, we find a simple proof that extends it to greater dimensions.

\begin{proof}[Proof that (b') implies (c)]
Assuming (b'), we will find a set $C$ satisfying the conditions (c) that has finitely many connected components, and such that these components have disjoint closures.
Let $X \times \R^d = E^u \oplus E^s$ be the dominated splitting for the universal $\Sigma$-valued cocycle.

\begin{claim}
If $x=(x_k)_{k \in \Z}$ then $E^s_x$ depends only on the future $(x_0, x_1, \ldots)$,
while $E^u_x$ depends only on the past $(\ldots, x_{-2}, x_{-1})$.
\end{claim}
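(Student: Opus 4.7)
The approach is to use the explicit construction of the dominated splitting provided by the proof of Theorem~\ref{t.yoccoz}, ``(b) implies (a)''. There, $E^s_x$ is characterized as the uniform limit of subspaces built directly from the forward matrix products, and the analogous characterization, applied to the inverse cocycle, realizes $E^u_x$ as a limit of subspaces built from backward matrix products. The claim will then reduce to the elementary observation that, for the universal cocycle, the matrix $A^n(x) = x_{n-1}\cdots x_1 x_0$ depends only on $(x_0,\ldots,x_{n-1})$ while $A^{-n}(x) = x_{-n}^{-1}\cdots x_{-1}^{-1}$ depends only on $(x_{-1}, x_{-2}, \ldots)$.

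More concretely, I would recall from the proof of Theorem~\ref{t.yoccoz} that the subspace $S^{(n)}_x$, defined as the eigenspace of $[(A^n(x))^* A^n(x)]^{1/2}$ associated to the $d-i$ smallest singular values, converges uniformly in $x$ to the stable bundle, which in the present notation is $E^s_x$. Since $S^{(n)}_x$ is a function of $A^n(x)$ alone, and $A^n(x)$ depends only on the future coordinates $(x_0,\ldots,x_{n-1})$ of $x$, the limit $E^s_x$ depends only on $(x_0, x_1, \ldots)$. To handle $E^u_x$, I would apply Theorem~\ref{t.yoccoz} to the inverse cocycle $\cA^{-1}$: the splitting $E^u \oplus E^s$ is dominated for $\cA^{-1}$ with the roles of the two bundles interchanged, so the same recipe, now applied to $[(A^{-n}(x))^* A^{-n}(x)]^{1/2}$, identifies $E^u_x$ with a uniform limit of subspaces determined by $A^{-n}(x)$. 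Since $A^{-n}(x)$ depends only on the past coordinates of $x$, so does $E^u_x$.

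The main obstacle is purely one of bookkeeping: one must verify that the singular-value-gap hypothesis of Theorem~\ref{t.yoccoz}(b) is preserved under $\cA \mapsto \cA^{-1}$ with the index $i$ replaced by $d-i$, and that the ``dominating'' bundle of $\cA$ is correctly identified with the ``dominated'' bundle of $\cA^{-1}$, so that the $S^{(n)}$-type characterization can legitimately be invoked on both sides. Once this symmetric reformulation is in place, the coordinate-dependence observation above finishes the proof with no further work.
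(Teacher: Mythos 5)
Your proposal is correct, but it follows a different route from the paper. The paper's own proof does not go back to the singular-subspace construction at all: it characterizes $E^s_x$ intrinsically, by the property $\cP(x)$ that a $(d-i)$-plane $F$ satisfies $\|A^n(x)|F\| < C\tau^n \, \m(A^n(x)|E)$ for all $n\ge 0$ and every transverse $i$-plane $E$, observes that $E^s_x$ is the unique plane with this property, and concludes since $\cP(x)$ is stated purely in terms of forward products; the statement for $E^u_x$ follows symmetrically. You instead recycle the explicit construction from the proof of Theorem~\ref{t.yoccoz}, (b)$\Rightarrow$(a): $E^s_x$ is the uniform limit of the spaces $S^{(n)}_x$, which are functions of $A^n(x)=x_{n-1}\cdots x_0$ alone, and $E^u_x$ is the analogous limit for the inverse cocycle built from $A^{-n}(x)=x_{-n}^{-1}\cdots x_{-1}^{-1}$; the index/bookkeeping issue you raise is already handled in the paper's proof of Theorem~\ref{t.yoccoz} (the gap at index $i$ for $\cA$ becomes a gap at index $d-i$ for $\cA^{-1}$, so the backward construction produces the $i$-dimensional bundle $E^u$). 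The two arguments buy slightly different things: the paper's is shorter and metric-free, but rests on the (standard, unproved there) uniqueness of the plane satisfying $\cP(x)$; yours leverages machinery already established, but implicitly needs the equally standard uniqueness of the index-$i$ dominated splitting to identify the limit of the $S^{(n)}_x$ with the given bundle $E^s_x$ (the paper's use of the definite article ``the dominated splitting'' makes this a harmless step, but it deserves a sentence). In both cases the heart of the matter is the same elementary observation that forward (resp.\ backward) products of the universal cocycle depend only on the future (resp.\ past) coordinates.
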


\begin{proof}
For each point $x$, let us say that an $(d-i)$-plane $F$ satisfies property $\cP(x)$ if
for every $i$-plane $E$ transverse to $F$, there are $C>0$ and $\tau<1$ such that
$\|A^n(x)|F\| < C \tau^n \m(A^n(x)|E)$ for every $n \ge 0$.
Then $E^s(x)$ is the only $i$-plane that satisfies $\cP(x)$.
Since $\cP(x)$ only depends on the future, so does $E^s_x$.
Symmetrically, we see that $E^u_x$ is a function of the past.
\end{proof}

We regard the bundles of the dominated splitting as continuous maps $E^u: \Sigma^\Z \to G(i,d)$,
$E^s: \Sigma^\Z \to G(d-i,d)$.
Let $C^u$ and $C^s$ be their respective images.
These two sets are compact, and transverse (in the sense that $PC^u$ and $PC^s$ are disjoint).
Indeed, for any $x =(x_k)$ and $y=(y_k)$,
we have $E^u_x = E^u_z$ and $E^s_y = E^s_z$ where $z = (\ldots, x_{-2}, x_{-1}, y_0, y_1, \ldots)$,
so transversality follows.

Let $C^{\pitchfork s}$ be the (open) subset of $G(i,d)$ formed by the
$i$-planes that are transverse to $C^s$.
We are going to define a metric $d_\infty$ on this set.
Start fixing any Riemannian metric $d_0$ on $G(i,d)$.
Then let
$$
d_n(E,F)=\sup_{A\in\Sigma^{*n}}d(A\cdot E,A\cdot F)
\quad \text{for $E$, $F \in C^{\pitchfork s}$.}
$$
Notice that $d_n$ goes uniformly exponentially fast to zero on
compact subsets of $C^{\pitchfork s}\times C^{\pitchfork s}$.
Indeed, for any $E\in C^{\pitchfork s}$, the subbundle $X \times E$ of $X\times \R^d$ is transverse to $E^s$. Hence, by compactness of $X$ and domination, the positive iterates of $X \times E$ converge exponentially fast to $E^u$, and the speed of convergence does not depend locally on $E$.
Therefore the series $d_\infty=\sum_{i\in\N}d_n$
converges uniformly on compact subsets of $C^{\pitchfork s} \times C^{\pitchfork s}$.
In particular, $d_\infty$ is a metric on $C^{\pitchfork s}$ that
induces the same topology as $d_0$.

Notice that if $E \neq F \in C^{\pitchfork s}$ then for any $A \in \Sigma$ we have
$d_\infty(A \cdot E, A \cdot F) < d_\infty(E, F)$. We call $d_\infty$ an {\em adapted metric}.

Consider the open $\epsilon$-neighborhood $C^u_\epsilon$ of $C^u$ with respect to the $d_\infty$ metric.
Since $C^u$ is compact, $C^u_\epsilon$ has finitely many connected components. The number of connected components of $C^u_\epsilon$ decreases when $\epsilon$ increases. Hence one can choose $\epsilon$ such that the number of connected components of $C^u_\nu$ is the same as for $C^u_\epsilon$, for some $\nu>\epsilon$. Then the connected components of $C^u_\epsilon$ are isolated.
Finally, for any $\epsilon>0$ there is $0< \delta< \epsilon$ such that $\Sigma^* C^u_\epsilon \subset C^u_\delta$.
Thus $C^u_\epsilon$ satisfies the all the conditions of (c).
\end{proof}

This ends the proof of Theorem~\ref{t.char}.

\begin{proof}[Proof of the Scholium]
First, in the proof that (b') implies (c) we already showed that $C$ can be found with the required additional properties, namely its connected components $C_1, \dots, C_k$ have disjoint closures.
Now let $C'\subset P\R^d$ be the union of the sets $P\overline{C_i}$. Since $\overline{C}=\overline{C_1}\cup \dots \cup \overline{C_k}$ is strictly invariant by $\Sigma$, the set $C'$ is a also closed and strictly invariant by $\Sigma$. It clearly has a finite number of connected components. Thus $C'$ satisfies (d) and the required additional properties.
\end{proof}

\section{An example}\label{s.example}

We follow the definition of~\cite{GV} and say that a set $C\subset P\R^d=G(1,d)$ is {\em semiconvex} if for any projective line $\Delta$, the intersection of $\Delta$ and $C$ is connected.
A familiar example is given by:
$$
C = P \big\{ (u,v) \in \R^d ; \; u \in \R^i, \ v \in \R^{d-i}, \   \|v\| \le a \|u\|\big\},
\quad \text{where $a>0$;}
$$
(Notice that in this example $C$ is \emph{not} the projectivization of a convex cone in $\R^d$,
say if $i=2$ and $d=3$.)

We say that a multicone $C\subset G(i,d)$ is {\em locally semiconvex} if each of its connected components $C_k$ is such that $PC_k$ is semiconvex.

\medskip

Assume that $\Sigma$ is an $i$-dominated set,
and hence has unstable multicone of index~$i$.
It is natural to look for multicones with additional good properties.
(For example, \cite{ABY} considers so-called tight multicones.)
It is possible to prove that a locally semiconvex unstable multicone of index $i$ always exists,
provided $i$ equals $1$ or $d-1$.\footnote{This leads to interesting dichotomies between $i$-dominated sets and $i$-elliptic sets (sets $\Sigma$ that contain matrices $M_1,\dots, M_k$ such that the $i$-th and $(i+1)$-th eigenvalues of the product $M_k\dots M_1$ are complex and conjugate).}
We will not prove this fact here,
but we will show that it unfortunately does not hold without assuming $i\neq 1$, $d-1$.
More precisely, we show the following:

\begin{thm}
\label{t.example}
There is a continuous map $A\colon [0,1]\to GL(4,\R)$ such that the set $\Im(A)$ is dominated of index $2$, and such that $\Im(A)$ has no locally semiconvex unstable (nor stable) multicone of index $2$.
Moreover, these properties persists by $C^0$-perturbations of $A$.
\end{thm}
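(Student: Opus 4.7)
My plan is to take $A(t) = D \cdot R(t)$, where $D = \operatorname{diag}(\lambda, \lambda, \lambda^{-1}, \lambda^{-1})$ with $\lambda \gg 1$ supplies the uniform index-$2$ domination and $R\colon[0,1]\to\SO(4)$ is a carefully chosen path that shapes the attractor of the induced cocycle in $G(2,4)$. For $\lambda$ large compared with $\sup_t\|R(t)^{\pm 1}\|=1$, Lemma~\ref{l.exterior} implies that every product $A(t_1)\cdots A(t_N)$ satisfies $\sigma_3/\sigma_2\le C\tau^N$ uniformly in the $t_j$, so by Theorem~\ref{t.yoccoz} the set $\Im(A)$ is dominated of index~$2$.

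The heart of the proof is to design $R(t)$ so that the minimal attractor $\Gamma\subset G(2,4)$, that is the closure of the image of the unstable bundle $E^u$ of the universal $\Im(A)$-valued cocycle, is connected but has a non-semiconvex projection $P\Gamma\subset P\R^3$. By strict invariance, every unstable multicone of index $2$ for $\Im(A)$ must contain $\Gamma$, and connectedness of $\Gamma$ forces a single component $C_1$ of the multicone to contain all of $\Gamma$. On the other hand, Theorem~\ref{t.char}(c) yields a $2$-plane $F$ transverse to every element of $C_1$, so $PC_1\subset P\R^3\setminus PF$. My aim is to arrange $\Gamma$ so that, for every admissible choice of $F$, there is a projective line $\Delta\subset P\R^3$ meeting $P\Gamma$ in at least two arcs that lie in distinct connected components of $\Delta\setminus PF$. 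The inclusion $PC_1\subset P\R^3\setminus PF$ then prevents $\Delta\cap PC_1$ from closing up across $PF$, contradicting semiconvexity of the component $C_1$. The stable case is handled by applying the same argument to $A(t)^{-1}$, which is still dominated of index~$2$ with the roles of $E^u$ and $E^s$ exchanged.

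Robustness under $C^0$-perturbations is then immediate: domination is $C^0$-open by Theorem~\ref{t.yoccoz}, the attractor $\Gamma$ depends upper-semicontinuously on $A$, and the obstruction is an open incidence condition between $\Gamma$, the witness line $\Delta$, and the set of $2$-planes $F$ transverse to $\Gamma$.

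The main obstacle I foresee is the explicit construction of $R(t)$: one must produce iterates of $D\cdot R(t)$ whose unstable $2$-planes fill out a connected subset of $G(2,4)$ realizing a loop nontrivial in $\pi_1(G(2,4))=\Z/2$ and positioned twistedly enough relative to every transverse $F$. This is why the phenomenon is specific to $1<i<d-1$: unlike in $G(1,d)$ or $G(d-1,d)$, the Grassmannian $G(2,4)$ admits closed loops of $2$-planes whose swept-out set of projective lines in $P^3$ forms a non-orientable ruled surface, and it is this twist that cannot be undone by enlarging $\Gamma$ to a multicone component missing some $PF$. Verifying that no clever choice of $F$ bypasses the obstruction will most likely require a direct computation in Plücker coordinates for the concrete path $R(t)$, and this is the step where the main technical work lies.
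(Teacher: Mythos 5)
There are two genuine gaps here, and they are fatal as written. First, your domination claim is false as stated: for $A(t)=D\,R(t)$ with $D=\operatorname{diag}(\lambda,\lambda,\lambda^{-1},\lambda^{-1})$ and $R(t)\in\SO(4)$ arbitrary, no choice of $\lambda$ large gives the estimate $\sigma_3/\sigma_2\le C\tau^N$ for all products. Lemma~\ref{l.exterior} only says that multiplying by a single bounded matrix changes each singular value by a bounded factor; it cannot control products $DR(t_1)DR(t_2)\cdots DR(t_N)$, because the interior rotations can realign the expanding and contracting planes. Concretely, if some $R(t_0)$ is the block rotation swapping $\R^2\times\{0\}$ with $\{0\}\times\R^2$, then $(DR(t_0))^2=\pm\Id$ and the ratio $\sigma_3/\sigma_2$ stays equal to $1$ along that product, for every $\lambda$. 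So domination is a property of the specific path $R(t)$, which you never construct --- and you yourself defer this construction ("the main obstacle") to an unperformed Pl\"ucker-coordinate computation. Second, your obstruction mechanism cannot work with a single transverse plane $F$: $PF$ is a projective line in $P\R^3$, so it meets your witness line $\Delta$ in at most one point (unless $\Delta=PF$, which is excluded since $\Delta$ meets $PC_1$), and removing one point from the circle $\Delta$ does not disconnect it. Hence "two arcs of $P\Gamma$ in distinct connected components of $\Delta\setminus PF$" is impossible, and $\Delta\cap PC_1$ can always close up on the unblocked side; semiconvexity of $C_1$ is not contradicted.

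The paper's proof supplies exactly the two ingredients you are missing. It does not try to get domination from a norm estimate: it builds two explicit continuous families of $2$-planes $D_1(t),D_2(s)\in G(2,4)$ (coming from two families of pairwise skew lines in $\R^3$, the ruled-surface construction of the first lemma of Section~\ref{s.example}), defines $A(t)$ to be $\lambda\Id$ on $D_1(t)$ and $\lambda^{-1}\Id$ on $D_2(t)$, and gets domination from strictly invariant disjoint neighborhoods $C_1\supset\Gamma_1=\bigcup_t D_1(t)$, $C_2\supset\Gamma_2=\bigcup_t D_2(t)$ via Theorem~\ref{t.char}(c)$\Rightarrow$(a). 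And the failure of semiconvexity is forced not by the single plane $F$ of condition (c) but by the whole second family $\Gamma_2$, which any unstable multicone component containing $\Gamma_1$ must avoid: Lemma~\ref{l.again} provides one plane $P$ whose projective line $\P(P)$ meets the two families in four lines in interleaved cyclic order $a<b<c<d<a$, with $a,c$ in $P\Gamma_1\subset PC$ and $b,d$ in $P\Gamma_2$ disjoint from $PC$ --- two blocking points, which is what genuinely disconnects $\P(P)\cap PC$. If you want to salvage your plan, you must (i) actually exhibit $R(t)$ and prove domination for it (e.g.\ by producing invariant cone fields, not by Lemma~\ref{l.exterior}), and (ii) replace the single-$F$ separation argument by an interleaving against an invariant family that the multicone must miss, as in the paper.
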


We begin with a geometrical construction:

\begin{lemma}
There are two continuous one-parameter families of lines
$L_1(t)$, $L_2(t)$  in $\R^3$, where $t$ runs in $[0,\pi]$,
with the following properties:
\begin{itemize}
\item There are four points $a\in L_1(0)$, $b \in L_2(\pi)$, $c \in L_1(\pi)$, $d\in L_2(0)$
belonging to the oriented $x$-axis $\mathbb{X}$ such that $a<b<c<d$.
\item For every $t$, $s$, the lines $L_1(t)$ and $L_2(s)$ are skew.
\end{itemize}
\end{lemma}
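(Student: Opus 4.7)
My plan is to construct $L_1(t)$ and $L_2(s)$ explicitly as lines parametrized by base points $p_i$ and direction vectors $v_i$, and to verify pairwise skewness via the Plücker coplanarity determinant
\[
F(t,s) \;=\; \bigl(p_2(s)-p_1(t)\bigr)\cdot\bigl(v_1(t)\times v_2(s)\bigr),
\]
which vanishes exactly when $L_1(t)$ and $L_2(s)$ are coplanar. Two lines are skew iff $F\neq 0$ and directions are not parallel, so the goal is to arrange $F\neq 0$ throughout $[0,\pi]^2$.

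For the boundary values, I would take $p_1(0)=a,\ p_1(\pi)=c,\ p_2(0)=d,\ p_2(\pi)=b$ on the $x$-axis, and choose direction vectors $v_1(0),v_1(\pi),v_2(0),v_2(\pi)$ lying in the $yz$-plane. Writing $u_i$ for the $(y,z)$-projection of $v_i$, the four corner values of $F$ factor as
\[
F\bigl(\epsilon_1,\epsilon_2\bigr) \;=\; \bigl(p_{2,x}-p_{1,x}\bigr)\cdot\bigl(u_1^{\epsilon_1}\wedge u_2^{\epsilon_2}\bigr)\qquad\text{for }\epsilon_i\in\{0,\pi\}.
\]
The first factor has signs $+,+,+,-$ at the corners $(0,0),(0,\pi),(\pi,0),(\pi,\pi)$ because $d-a,\ b-a,\ d-c>0$ while $b-c<0$. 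To make all four corner values share a single sign, I would choose the four direction angles so the wedges $u_1^{\epsilon_1}\wedge u_2^{\epsilon_2}$ exhibit the same sign pattern $+,+,+,-$; an elementary check shows such angles exist (for instance, $\theta_1^0=0,\ \theta_2^0=\pi/2,\ \theta_1^\pi=\pi/3,\ \theta_2^\pi=\pi/4$).

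For the interior, I would interpolate the base points $p_1(t),p_2(s)$ continuously, lifting them slightly off the $x$-axis into opposite half-spaces (say $p_1$ into $z>0$, $p_2$ into $z<0$ for parameters in $(0,\pi)$), and interpolate the direction vectors continuously between the chosen boundary values. The extra terms this introduces into $F$ can be controlled by taking the lift amplitude small, so that $F$ maintains its common boundary sign across $[0,\pi]^2$ by continuity and compactness.

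The main obstacle is the topological linking of $a,b,c,d$ on the $x$-axis: naively, with constant directions, $F(\pi,s)$ would be forced to change sign between $s=0$ (giving $c-d<0$) and $s=\pi$ (giving $c-b>0$), producing an intersection for some intermediate $s$. The resolution is precisely the direction twist at the boundary—rotating the direction vector of $L_1(\pi)$ relative to $L_1(0)$ (and similarly for $L_2$) so that the wedge-product factor in $F$ reverses sign in step with the $x$-axis-separation factor. This compensation, which is the content of the corner sign analysis above, makes $F$ one-signed and hence nowhere vanishing on $[0,\pi]^2$, finishing the construction.
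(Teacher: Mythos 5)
There is a genuine gap at the step where you pass from the four corner signs to ``$F$ maintains its common boundary sign across $[0,\pi]^2$ by continuity and compactness.'' With the base points on (or near) the $x$-axis, the leading term of $F$ is the product $\Delta x(t,s)\cdot W(t,s)$, where $\Delta x=p_{2,x}(s)-p_{1,x}(t)$ and $W$ is the wedge of the $(y,z)$-parts of the directions. This leading term \emph{necessarily} vanishes on a nonempty set: along the edge $t=\pi$ the factor $\Delta x$ goes from $d-c>0$ to $b-c<0$, so it crosses zero, and in the interior its zero set is a whole curve separating the corner $(\pi,\pi)$ from the other three (indeed, in the unlifted configuration the two base points coincide there, so the lines actually meet). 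Consequently ``take the lift amplitude small'' is backwards: near the zero set of $\Delta x\cdot W$ it is precisely the lift and twist terms that must produce a definite sign, and making them small does not produce a sign that the leading term does not have. Worse, unless the zero curve of $W$ is made to coincide \emph{exactly} with the zero curve of $\Delta x$ (your corner angle choices only coordinate the signs at the four corners, not along the edges or in the interior), there are lens-shaped regions where $\Delta x\cdot W$ is strictly negative, and no small perturbation can repair that. So the ``direction twist in step with the separation factor,'' which you correctly identify as the crux, is exactly what remains unproved: one must (i) choose the direction angles as a monotone function of the $x$-coordinate of the base point so the two zero curves coincide, and then (ii) verify by an explicit computation that along that common curve the contribution of the opposite-half-space lift to $F$ has a definite sign, and that it does not destroy the sign elsewhere. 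Step (ii) is the actual content of the lemma and is missing from your argument.

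For comparison, the paper's proof does this concretely: it takes the explicit planar curves $\gamma_1(t)=(t-\sin t,\tfrac12\sin t,0)$ and $\gamma_2(t)=(\tfrac{3\pi}{2}-t+\sin t,-\tfrac12\sin t,0)$, bulging into opposite $y$-half-planes, takes $L_i(t)$ through $\gamma_i(t)$ with direction ``unit tangent plus $(0,0,1)$,'' and then proves disjointness of the two ruled surfaces $S_1,S_2$ by splitting them into the parts above and below $z=0$ and comparing heights over the two overlap regions $K_1,K_2$ ($z_1^+\le\diam(K_1)<\mathrm{dist}(b,K_1)\le z_2^+$, etc.). That height comparison is exactly the quantitative verification that your proposal defers to ``continuity and compactness.'' Your Pl\"ucker-determinant framework is a perfectly reasonable alternative language, but to complete it you would have to supply an analogous explicit estimate of $F$ along the locus where $\Delta x$ and $W$ vanish together.
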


\begin{proof}
We take usual coordinates $x$, $y$, $z$ in the space $\R^3$.
Consider the curves
$$
\gamma_1(t) = \left(t -\sin t, \tfrac{1}{2} \sin t ,0\right)\, , \quad
\gamma_2(t) = \left(\tfrac{3\pi}{2} - t + \sin t, -\tfrac{1}{2} \sin t ,0\right) \, , \quad
t \in [0,\pi] \, ,
$$
and their endpoints $a = \gamma_1(0)$, $c=\gamma_1(\pi)$, $d=\gamma_2(0)$, $b= \gamma_2(\pi)$.
See Figure~\ref{f.curves}.
\psfrag{K1}[][l]{{$K_1$}}
\psfrag{K2}[][r]{{$K_2$}}
\psfrag{a}[][u]{{$a$}}
\psfrag{b}[][d]{{$b$}}
\psfrag{c}[][u]{{$c$}}
\psfrag{d}[][d]{{$d$}}
\begin{figure}[hbt]
\includegraphics[width=9cm]{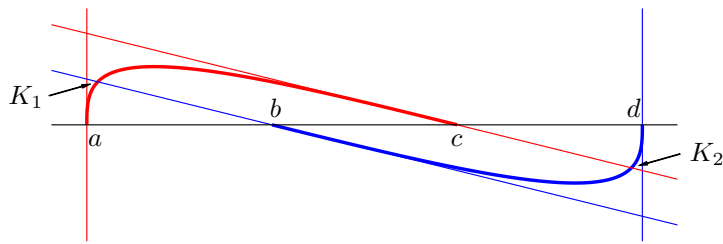}
\caption{The curves $\gamma_1$, $\gamma_2$ and their tangents at the points $a$, $b$, $c$, $d$.}
\label{f.curves} 
\end{figure}
Let $v_i(t) = \gamma_i'(t) / \|\gamma_i'(t)\|$.
Let $L_i(t)$ be the line containing the point $\gamma_i(t)$ and parallel to the vector $v_i(t) + (0,0,1)$.
Notice that no lines $L_1(t)$ and $L_2(s)$ are parallel.
Next consider the (ruled) surfaces $S_i = \bigcup_t L_i(t)$; see Figure~\ref{f.surfaces}.
We are left to prove that $S_1\cap S_2 =\emptyset$.
\begin{figure}[hbt]
\includegraphics[width=5cm]{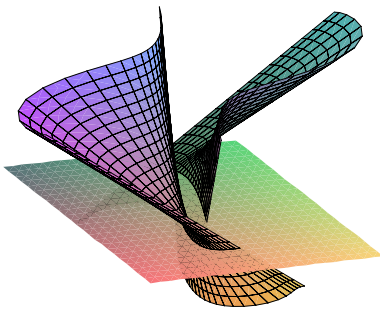}
\caption{The ruled surfaces $S_1$, $S_2$, and the plane $z=0$.}
\label{f.surfaces} 
\end{figure}

Let $S_i^+$ and $S_i^-$ be the intersections of $S_i$ with the half-spaces $\{z>0\}$ and $\{z<0\}$, respectively.
Then $S_i = S_i^- \sqcup \Im(\gamma_i) \sqcup S_i^+$.
Moreover, each $S_i^\pm$ is the graph of a function $z_i^\pm$ defined on a region $P_i^\pm$ of the $xy$ plane.
The regions $P_1^-$ and $P_2^-$ are disjoint.
On the other hand, $P_1^+ \cap P_2^+$ has two connected components $K_1$ and $K_2$; see again Figure~\ref{f.curves}.
On $K_1$ we have
$$
z_1^+ \le \mathrm{diam}(K_1) < \mathrm{dist}(b,K_1) \le z_2^+  \, ,
$$
thus the surfaces do not intersect over $K_1$.
Symmetrically, the same holds on $K_2$.
\end{proof}

We remark that the domain of definition above may be extended to some slightly larger interval $[-\epsilon, \pi+\epsilon]$
so that the two families of lines are still skew. Now by transversality of the ruled surfaces $L_i$ and the $x$-axis $\mathbb{X}$, one sees that for any $C^0$-perturbation $L'_i$ of the families $L_i$ one has:
\begin{itemize}
\item two points $a\in L_1'(t_1)$, $c\in L_1'(t_2)$ and two points $b \in L_2'(s_1)$, $d \in L_2'(s_2)$
belonging to $\mathbb{X}$ such that $a<b<c<d$.
\item For every $t$, $s$, the lines $L_1'(t)$ and $L_2'(s)$ are skew.
\end{itemize}

Now viewing $\R^3$ as the subset of points $[x:y:z:1]$ of $P\R^4$, we have the straightforward consequence:
\begin{lemma}\label{l.again}
There are two continuous one-parameter families $D_1(t)$, $D_2(s) \in G(2,4)$, $t$, $s\in [0,1]$ such that:
 \begin{itemize}
 \item every plane $D_1(t)$ is transverse to every plane of $D_2(s)$,
 \item there is a plane $P$ that contains four lines $a$, $c$ in planes of $\Im(D_1)$ and $b$, $d$ in planes of $\Im(D_2)$ such that for some cyclic order in the circle $\P(P)$ we have
$a < b < c < d < a$.
\end{itemize}
Moreover, the same properties hold for $C^0$ perturbations of $D_1$, $D_2$.
\end{lemma}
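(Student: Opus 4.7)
My plan is to lift the construction of the previous lemma from $\R^3$ to $G(2,4)$ by homogenization. I identify $\R^3$ with the affine chart $\{[x:y:z:1]\}\subset \P\R^4$, so that each affine line $L\subset\R^3$ passing through a point $p$ with direction vector $v$ corresponds to the $2$-plane
\[
D(L) := \mathrm{span}\bigl\{(p,1),\,(v,0)\bigr\}\in G(2,4).
\]
I then define $D_i(t) := D(L_i(t))$, reparametrizing the domain $[0,\pi]$ of the previous lemma to $[0,1]$. Continuity of the two families follows immediately from the explicit formulas for $\gamma_i$ and $v_i$.

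To verify transversality of $D_1(t)$ and $D_2(s)$ for all $t,s$, I would use the following elementary dictionary. If a nonzero vector lies in $D(L_1)\cap D(L_2)$, comparing last coordinates forces the two coefficients of $(p_i,1)$ to be equal; if they vanish, the directions $v_1$, $v_2$ must be parallel, while if they are nonzero, dividing through shows that the affine lines $L_1$ and $L_2$ intersect. Hence $D(L_1)\pitchfork D(L_2)$ in $\R^4$ if and only if $L_1$ and $L_2$ are skew in $\R^3$, and the previous lemma supplies precisely the skew hypothesis for every $t,s$.

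For the plane $P$, I would take $P := \mathrm{span}\{e_1,e_4\}\in G(2,4)$, the homogenization of the oriented $x$-axis $\mathbb{X}\subset\R^3$. Each affine point $(x,0,0)\in\mathbb{X}$ lifts to the $1$-dimensional subspace $\langle(x,0,0,1)\rangle\subset P$, and the additional line $\langle e_1\rangle$ supplies the point at infinity that closes $\P(P)$ into a circle. The four affine points $a<b<c<d$ of the previous lemma then lift to four distinct lines of $P$, each contained by construction in the appropriate $D_i(t_j)$ (namely $a\subset D_1(0)$, $b\subset D_2(1)$, $c\subset D_1(1)$, $d\subset D_2(0)$ after reparametrization). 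The linear order on $\mathbb{X}$ becomes the cyclic order $a<b<c<d<a$ on $\P(P)\cong S^1$, with the point at infinity falling in the arc between $d$ and $a$.

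Finally, persistence under $C^0$ perturbations is inherited from the analogous statement in the previous lemma: the pairwise transversality of two compact families in $G(2,4)$ and the existence of a plane meeting them in four lines of prescribed cyclic order are both $C^0$-open conditions, so for any small perturbation of $D_1,D_2$ the same $P$ continues to work, with perturbed intersection points preserving the cyclic order. I do not expect a genuine obstacle; the main step requiring care is the skew-versus-transverse dictionary and the correct tracking of the cyclic order through projectivization, but both are routine once the homogenization is set up.
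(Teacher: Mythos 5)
Your proposal is correct and follows essentially the same route as the paper, which simply declares the lemma a ``straightforward consequence'' of the preceding construction via the identification of $\R^3$ with the affine chart $[x:y:z:1]$; you have merely written out the homogenization, the skew-versus-transverse dictionary, and the cyclic order explicitly. The only point worth flagging is that the $C^0$-persistence of the four intersection lines is not automatic from openness alone but relies on the transversality of the ruled surfaces with the $x$-axis established just before the lemma, which is exactly the ingredient the paper invokes and which your appeal to ``the analogous statement in the previous lemma'' implicitly uses.
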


If $D_1(t)$ and $D_2(s)$ are as in the lemma, then
the sets $\Gamma_1=\bigcup_{t\in[0,1]}D_1(t)$ and $\Gamma_2=\bigcup_{t\in[0,1]}D_2(t)$ cannot be semiconvex cones.
Fix disjoint open neighborhoods $C_1 \supset \Gamma_1$ and  $C_2 \supset \Gamma_2$ in $G(2,4)$.
Now let $\lambda>1$ and define $A(t)\in GL(d,\R)$ for $t\in [0,1]$ as the linear map that coincides with
$\lambda \cdot \Id$ on $D_1(t)$ and with $\lambda^{-1} \cdot \Id$ on $D_2(t)$.
Fixing $\lambda$ large enough, we have that
$C_1$ and $C_2$ are strictly invariant by the sets $\Sigma = \{A(t)\}$ and $\{A(x)^{-1}\}$, respectively.
Any unstable multicone of index $2$ of $\Sigma$ has a connected component $C$ that contains the set $\Gamma_1$ and cannot intersect $\Gamma_2$. By Lemma~\ref{l.again}, the projective set $PC$ is not semiconvex. Hence $\Sigma$ does not admit any locally semiconvex unstable multicone of index $2$. By symmetry, the same holds replacing unstable by stable.
It is clear that these properties persist by $C^0$-perturbations of $A$. This ends the proof of Theorem~\ref{t.example}.


\end{document}